\newtheorem{lemma}{Lemma}[section]
\newtheorem{cor}[lemma]{Corollary}
\newtheorem{proposition}[lemma]{Proposition}
\theoremstyle{definition}
\newtheoremstyle{examplestyle}            
     {.8cm}                    
     {1.0cm}                    
     {\upshape}                    
     {}                        
     {\bfseries\slshape}                
     {.}                    
     {  }                    
     {\thmname{#1}\thmnumber{ #2}\thmnote{ (#3)}}                        
\theoremstyle{examplestyle}
\newtheorem{example}[lemma]{Example}
\numberwithin{equation}{section}
\newcommand{\fst}[1][{[0,1]}]{\text{ f.s. } t\in#1}
\newcommand{\fat}[1][{[0,1]}]{\text{ f.a. } t\in#1}
\newcommand{\norm}[1]{\left\Vert#1\right\Vert}
\newcommand{\abs}[1]{\left\vert#1\right\vert}
\newcommand{\R}{\mathbb R}
\newcommand{\N}{\mathbb N}
\newcommand{\Q}{\mathbb Q}
\newcommand{\bfone}{\bm{1}}
\newcommand{\bfZ}{\bm{Z}}
\newcommand{\bfzeta}{\bm{\zeta}}
\newcommand{\bfeta}{\bm{\eta}}
\newcommand{\barE}{\bar E^-[0,1]}
\newcommand{\barC}{\bar C^-[0,1]}
\begin{document}

 \title{On the Hitting Probability of Max-Stable Processes}

\author{Martin Hofmann}
 \address{ University of W\"{u}rzburg \\
              Institute of Mathematics\\
               Emil-Fischer-Str. 30\\
                97074 W\"{u}rzburg, Germany\newline hofmann.martin@mathematik.uni-wuerzburg.de}
\subjclass{Primary 60G70}%
\keywords{Max-stable process, hitting probability, functional $D$-norm, total dependence process, probability of hitting more than once}
\begin{abstract}
The probability that a max-stable process $\bfeta$ in $C[0,1]$ with identical marginal distribution function $F$ hits $x \in \R$ with $0<F(x)<1$ is the hitting probability of $x$. We show that the hitting probability is always positive, unless the components of $\eta$ are completely dependent. Moreover, we consider the event that the paths of standard MSP hit some $x\in\R$ twice and give a sufficient condition for a positive probability of this event.
\end{abstract}

\maketitle

 \section{Introduction and Preliminaries}
 A \emph{max-stable process} (MSP) $\bfzeta=\left(\zeta_t\right)_{t\in[0,1]}$  which realizes in the space $C[0,1]:=\{f:[0,1]\to\R:\ f \textrm{ continuous}\}$, equipped with the sup-norm $\norm f_\infty=\sup_{t\in[0,1]}\abs{f(t)}$, is a stochastic process with the characteristic property that its distribution is max-stable, i.e., $\bfzeta$ has the same distribution as $\max_{1\leq i\leq n}(\bfzeta_i-b_n)/a_n$ for independent copies $\bfzeta_1,\bfzeta_2,\dots$ of $\bfzeta$ and some $a_n,b_n\in C[0,1],\,a_n>0$, $n\in\N$ (cf. de Haan and Ferreira  \cite{dehaf06}), where the maximum is taken pointwise.

As in the finite dimensional case, it is possible to consider some standard case of univariate marginal distributions and reaching all other cases via transformation of the univeriate margins, cf. \cite{ginhv90}, \cite{dehaf06}. In this paper we say that an MSP $\bfeta$ is a
\emph{standard} MSP, if it is an MSP with standard negative
exponential (one-dimensional) margins, $P(\eta_t\le x)=\exp(x)$,
$x\le 0$, $t\in[0,1]$.

In the following, we show that the paths of such standard MSP hit every $x_0<0$ with positive probability, unless the margins are completely dependent. In Section \ref{sec:more_than_once} we go beyond and consider the event that the sample paths of standard MSP hit some $x_0<0$ more than once.

The abbreviations "f.s." and "f.a." mean "for some" and "for all", respectively, for example $P(\eta(t)\leq f(t),\fat)=P(\eta(t)\leq f(t),\text{ for all }t\in[0,1])$.
 
 Denote by $E[0,1]$ the set of all functions on $[0,1]$ which are
bounded and which have only a finite number of discontinuities and by $\bar E^-[0,1]$ those functions in $E[0,1]$ which do not attain positive values.

 Due to Aulbach et. al \cite{aulfaho11}, there is for every standard MSP  $\bfeta$  some continuous \emph{generator process} $\bfZ=(Z_t)_{t\in[0,1]}$ in $C[0,1]$ with the properties 
 \begin{equation}\label{eqn:properties_of_generator}
\bfZ\ge 0 \text{ a.s.},\quad  E(Z_t)=1,\  t\in[0,1], \quad\mbox{ and }\quad m:=E(\sup_{t\in[0,1]}Z_t) < \infty.
\end{equation} The connection between $\bfeta$ and $\bfZ$ is 
\begin{eqnarray}
P(\bfeta_t\le f(t),\fat) &=& \exp\left(-E\left(\sup_{t\in [0,1]}\left(\abs{f(t)}Z_t\right)\right)\right)\nonumber\\
&=:& \exp\left(-\norm f_D\right),\label{eq:distribution_function_of_standard_MSP}
\end{eqnarray}
which holds for every $ f\in \bar E^-[0,1]$.
Conversely, if there is some continuous process $\bfZ$ with properties \eqref{eqn:properties_of_generator} then there exists a standard MSP $\bfeta$ with this generator, cf. \cite{aulfaho11}.

It is easy to see that $\norm \cdot_D$ defines a norm on  $\bar E^-[0,1]$, and it is called the $D$-norm of $\bfeta$. While a generator $\bfZ$ is not uniquely determined by equation \eqref{eq:distribution_function_of_standard_MSP}, the generator constant $m=\norm{1}_D$ obviously is.

\section{Hitting Probability of standard MSP}
The considerations in Aulbach et. al \cite{aulfaho11} entail in particular, that for every subinterval $I\subset[0,1]$ of positive length 
\begin{equation}\label{eq:standardMSPnotzero}
P(\eta_t=0,\fst[I])=1-P\left(\sup_{t\in I}\eta_t<0\right)=0.
\end{equation}
Furthermore, there is for every $f\in\barE$ 
\begin{equation*}
P\left(\bfeta\leq f\right)\ =\ P\left(\bfeta < f\right),
\end{equation*}
which follows immediately from the fact, that every $D$-norm is equivalent to the sup-norm on $\barE$, cf. \cite{aulfaho11}.
But this implies for all $f\in\barE$
\begin{eqnarray}
\lefteqn{P\left(\left\{\eta_t=f(t),\fst\right\}\cap\left\{\eta_t\leq f(t),\fat\right\}\right)}\notag\\
&=& P\left( \bfeta (t)\leq f(t),\fat\right)-P\left( \eta_t < f(t),\fat\right)\notag\\
&=&0.\label{eq:margin_prob_is_zero}
\end{eqnarray}
Now one may raise the question wether $P\left(\eta_t=f(t),\fst\right)=0$ is true for $f\in\barE$, in accordance to the finite dimensional case: as a rv $X=(X_1,\ldots,X_d)$ in $\R^d$ with negative exponentially distributed margins has a continuous distribution, there is $P\left(X_i=x, \text{ for some }i\in\{1,\ldots,d\}\right)=0$ for every $x\in(-\infty,0]$.

\begin{example}\label{ex:completedependence}
Consider the complete dependence case, i.e. the standard MSP $\bfeta$ with generator constant $m=1$ (note that this is the case iff the corresponding $D$-norm is equal to the sup-norm, $\norm{\cdot}_D=\norm{\cdot}_\infty$, due to the functional version of Takahashi's Theorem \cite{taka88}, \cite{aulfaho11}).
 We get immediately $P\left(\eta_t=x,\fst\right)=P(\eta_0=x)=0$, as $P(\eta_0\le x)=\exp(x),\ x<0.$

On the other hand, for arbitrary \emph{non-constant} continuous functions $f\in\barC$, 
\begin{equation*}
P\left(\eta_t=f(t),\fst\right)=P\left(\eta_0\in \mathfrak{im}(f)\right)>0,
\end{equation*}
as the image $\mathfrak{im}(f)$ of $f$ is an interval of positive length.
\end{example}

The next Proposition is the main result of this paper and gives a complete answer to the foregoing question. 

For some subset $I\subset[0,1]$ define by $\bfone_I:[0,1]\to\{0,1\}$ the indicator function of $I$, i.e. $\bfone_I(t)=1$, if $t\in I$, and $\bfone_I(t)=0$, if $t\not\in I$.

\begin{proposition}\label{theo:hittingProbabilityNonZero}
Let $\bfeta$ be a standard MSP with generator process $\bfZ$. Suppose that there exists $x_0<0$ and a subinterval $I\subset[0,1]$ with positive length such that 
\begin{equation*}
P(\eta_t=x_0,\fst[I]) = 0.
\end{equation*}
Then $Z_t=Z_s$ almost surely for $t,s\in I$. 

Conversely, if for a subinterval $I\subset[0,1]$ with positive length there is 
$\norm{\bfone_I}_{D}>1$, then 
\begin{equation*}
P(\eta_t=x_0, \fst[I]) > 0\ \text{ for all }x_0 < 0.
\end{equation*}
\end{proposition}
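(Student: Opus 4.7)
The plan is to reduce both directions to a common bivariate computation. For any $s,t\in I$ and $x_0<0$, the step function $f := x_0\,\bfone_{\{s,t\}}$ lies in $\barE$, so formula \eqref{eq:distribution_function_of_standard_MSP}---together with the a.s.\ pointwise bound $\bfeta\le 0$---yields
\[
P(\eta_s\le x_0,\,\eta_t\le x_0)=\exp\!\bigl(x_0\,E(\max(Z_s,Z_t))\bigr).
\]
Combined with $P(\eta_t\le x_0)=\exp(x_0)$ and $P(\eta_t=x_0)=0$, this gives
\[
P(\eta_s>x_0,\,\eta_t<x_0)=\exp(x_0)-\exp\!\bigl(x_0\,E(\max(Z_s,Z_t))\bigr),
\]
which is strictly positive whenever $P(Z_s\ne Z_t)>0$, since then $E(\max(Z_s,Z_t))=1+E((Z_t-Z_s)_+)>1$. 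Continuity of $\bfeta$ and the intermediate value theorem on the segment $[\min(s,t),\max(s,t)]\subset I$ then place this event inside $\{\eta_u=x_0,\ \fst[I]\}$, so the hitting probability is positive.

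For the first statement of the proposition I would argue by contraposition: the existence of $s,t\in I$ with $P(Z_s\ne Z_t)>0$ would, by the above, contradict the hypothesis $P(\eta_t=x_0,\ \fst[I])=0$, forcing $Z_t=Z_s$ a.s.\ for all $t,s\in I$. For the converse it suffices to locate $s_0,t_0\in I$ with $P(Z_{s_0}\ne Z_{t_0})>0$ and invoke the bivariate core. Fix $t_0\in I$; the hypothesis reads $E(\sup_{t\in I}Z_t-Z_{t_0})=\norm{\bfone_I}_D-1>0$, so $\sup_{t\in I}Z_t>Z_{t_0}$ holds on a set of positive probability. Path continuity of $\bfZ$ gives $\sup_{t\in I}Z_t=\sup_{t\in I\cap\Q}Z_t$ almost surely, and a countable union over $s\in I\cap\Q$ singles out an $s_0$ with $P(Z_{s_0}>Z_{t_0})>0$, in particular $P(Z_{s_0}\ne Z_{t_0})>0$.

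The bivariate max-stable computation and the intermediate value step are mechanical once formula \eqref{eq:distribution_function_of_standard_MSP} is in hand; the only delicate point I expect is the passage from the scalar inequality $\norm{\bfone_I}_D>1$ to a \emph{single} pair with $P(Z_{s_0}\ne Z_{t_0})>0$. Pairwise almost-sure equality of $\bfZ$ across $I$ cannot be collapsed to a joint almost-sure statement without the continuity of $\bfZ$, which is exactly what allows the uncountable supremum over $I$ to be replaced by the countable supremum over $I\cap\Q$.
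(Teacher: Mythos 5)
Your argument is correct, but it follows a genuinely different route from the paper's. The paper argues globally on the interval $I$: assuming $P(\eta_t=x_0,\fst[I])=0$, it observes (an implicit intermediate-value-theorem step) that $\{\eta_{t_0}\le x_0-1/k\}$ is a.s.\ contained in $\{\eta_t<x_0,\fat[I]\}$, lets $k\to\infty$ to conclude $\exp(x_0)=\exp(x_0\norm{\bfone_I}_D)$, hence $E(\sup_{t\in I}Z_t)=1$, and then reads off the constancy of $\bfZ$ on $I$ from $E(Z_s)=1$; the converse is then immediate by contraposition, with no extra work, since $\norm{\bfone_I}_D\ge 1$ always. You instead reduce everything to pairs via the bivariate identity $P(\eta_s>x_0,\,\eta_t<x_0)=\exp(x_0)-\exp\left(x_0E(\max(Z_s,Z_t))\right)$ and the intermediate value theorem on $[\min(s,t),\max(s,t)]\subset I$. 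This is sound; the only points worth spelling out are (a) that $P(Z_s\ne Z_t)>0$ really forces $E((Z_t-Z_s)_+)>0$ --- this uses $E(Z_s)=E(Z_t)=1$ to exclude $Z_t\le Z_s$ a.s.\ with strict inequality on a set of positive measure --- and (b) the separability step you already flag, extracting a single pair $s_0,t_0$ with $P(Z_{s_0}>Z_{t_0})>0$ from $\norm{\bfone_I}_D>1$ via $\sup_{t\in I}Z_t=\sup_{t\in I\cap\Q}Z_t$ and countable subadditivity, a step the paper's global argument does not need. In exchange, your version makes the crossing mechanism explicit and yields the quantitative lower bound $P(\eta_t=x_0,\fst[I])\ge\exp(x_0)-\exp\left(x_0E(\max(Z_{s_0},Z_{t_0}))\right)$, which the paper's proof does not provide.
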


\begin{proof}
\allowdisplaybreaks
Assume $P\left(\eta_t=x_0,\fst[I]\right)=0$ for some $x_0<0$ and an interval $I\subset[0,1]$ with positive length. Define for $k\in\N$ and arbitrary $t_0\in I$ the functions $g,g_k\in\barE$ by
\begin{equation*}
g(t):= x_0\bfone_I(t); \qquad g_{t_0,k}(t):= (x_0-1/k)\bfone_{t_0}(t).
\end{equation*}

Then, with equation \eqref{eq:standardMSPnotzero},
\begin{eqnarray*}
P\left(\eta_t< x_0,\fat[I]\right)&=&P\left(\eta_t< g(t),\fat\right)\\
&=&\exp\left(x_0\norm{\bfone_I}_D\right)=\exp\left(x_0\left(\sup_{t\in I}Z_t\right)\right).
\end{eqnarray*}
By assumption, we get on the other hand
\begin{eqnarray*}
\exp(x_0-1/k)&=&P\left(\eta_t\leq g_{t_0,k}(t),\fat\right)\\
&=&P\left(\eta_t\leq g_{t_0,k},\fat, \eta_t<x_0,\fat[I]\right),
\end{eqnarray*}
and, thus,
\begin{eqnarray*}
\exp(x_0)&=& \lim_{k\to\infty}P\left(\eta_t\leq g_{t_0,k}(t),\fat, \eta_t<x_0,\fat[I]\right)\\
&=& P\left(\bigcup_{k\in\N}\{\eta_t\leq g_{t_0,k}(t),\fat, \eta_t<x_0,\fat[I]\}\right)\\
&=& P\left(\eta_t<x_0,\fat[I]\}\right)\\
&=& \exp\left(x_0\norm{\bfone_I}_D\right),
\end{eqnarray*}
i.e. $\norm{\bfone_I}_D=E\left(\sup_{t\in I}Z_t\right)=1$.

As $\bfZ$ is a generator process fulfilling the conditions \eqref{eqn:properties_of_generator}, we get for every $s\in I$
\begin{equation*}
E\left(\sup_{t\in I}Z_t-Z_s\right)=0 \iff \sup_{t\in I}Z_t=Z_s\ \text{a.s.},
\end{equation*}
and, thus, $Z_t=Z_s$ for all $s,t\in I$ with probability one.
\end{proof}

We give an example of a standard MSP $\bfeta$, which has a generator constant $m>1$ but there is a interval $I$ on which its generator $\bfZ$ fulfills $Z_t=Z_s$ for all $s,t\in I$ a.s..

\begin{example}\label{Z_partially_constant}
Let $Z_0,Z_1$ some independent and identical distributed rv with 
$$
P(Z_i=\frac1n)=\frac{n}{n+1}=1-P(Z_i=n),\text{ i.e. }E(Z_i)=1,\ i=0,1,
$$ for some $n\in\N$. With some $0<a<b<1$ define
\begin{equation*}
Z_t:=\begin{cases}\frac{a-t}{a}Z_0+\frac ta & \text{for }t\in[0,a);\\
                  1                         & \text{for }t\in[a,b];\\
                  \frac{1-t}{1-b}+ \frac{t-b}{1-b}Z_1& \text{for }t\in(b,1],\end{cases}
\end{equation*}
and $\bfZ$ is obviously a generator process fulfilling \eqref{eqn:properties_of_generator}. But $\bfeta$ is for $n>1$ not the complete dependence MSP as $m=E(\sup_{t\in[0,1]}Z_t)=(3n^2+n)/(n+1)^2>1$ for  $n>1$.
\end{example}

The following Corollary follows immediately from Proposition \ref{theo:hittingProbabilityNonZero}.

\begin{cor}
A standard MSP $\bfeta$ has complete dependent margins, i.e. its $D$-norm is equal to the $\sup$-norm, if and only if
\begin{equation}
P\left(\eta_t=x_0,\fst\right)=0
\label{eq:characterizationofComplDep}
\end{equation}
 for some $x_0<0$. In this case \eqref{eq:characterizationofComplDep} holds for \emph{every} $x_0<0$.
\end{cor}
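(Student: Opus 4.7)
The plan is to deduce the corollary directly from Proposition~\ref{theo:hittingProbabilityNonZero} applied with $I=[0,1]$, combined with the characterisation of complete dependence by $m=1$ (equivalently, by $\norm{\cdot}_D=\norm{\cdot}_\infty$) recalled in Example~\ref{ex:completedependence}.

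For the implication ``$P(\eta_t=x_0,\fst)=0$ for some $x_0<0$ $\Rightarrow$ complete dependence'', I would take the contrapositive of the converse assertion of Proposition~\ref{theo:hittingProbabilityNonZero} with $I=[0,1]$: the hypothesis forces $\norm{\bfone_{[0,1]}}_D\le 1$. Since for every generator one always has $m=\norm{\bfone_{[0,1]}}_D=E(\sup_{t\in[0,1]}Z_t)\ge E(Z_0)=1$, this yields $m=1$, and the functional Takahashi theorem (cf.\ Example~\ref{ex:completedependence}) then gives $\norm{\cdot}_D=\norm{\cdot}_\infty$, i.e.\ complete dependence.

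For the converse and the ``in this case'' addendum, I would argue as suggested in Example~\ref{ex:completedependence}. Under complete dependence, the bivariate laws take the comonotone form $P(\eta_s\le x_1,\eta_t\le x_2)=\exp(-\max(\abs{x_1},\abs{x_2}))=\min\set{P(\eta_s\le x_1),P(\eta_t\le x_2)}$, which combined with the continuous standard negative exponential margins forces $\eta_s=\eta_t$ a.s.\ for every fixed pair $s,t\in[0,1]$. A countable-dense argument together with path continuity of $\bfeta$ then promotes this to a.s.\ constancy of the whole path, so $\{\eta_t=x_0,\fst\}$ agrees up to a null set with $\{\eta_0=x_0\}$, which has probability zero for \emph{every} $x_0<0$.

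The only delicate step is the passage from pairwise to simultaneous a.s.\ constancy of the sample paths, which rests on continuity of $\bfeta$ in $C[0,1]$; everything else is a direct invocation of Proposition~\ref{theo:hittingProbabilityNonZero} and of the functional Takahashi theorem.
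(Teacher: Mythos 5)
Your proof is correct and follows essentially the same route as the paper, which derives the corollary from Proposition~\ref{theo:hittingProbabilityNonZero} applied with $I=[0,1]$ together with the observation in Example~\ref{ex:completedependence} that complete dependence forces almost surely constant paths. You merely make explicit the details the paper leaves implicit, namely the bound $m\ge E(Z_0)=1$ and the passage from pairwise to pathwise constancy via continuity.
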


Now we consider for some standard MSP $\bfeta$ the function $h_{\bfeta}:(-\infty,0]\to[0,1]$ defined by $x\mapsto P\left(\eta_t=x,\fst\right).$

Then $h_{\bfeta}(x)$ is the "hitting probabilitiy" of $\bfeta$ and $x$. Proposition \ref{hittingprob} below states some properties of $h_{\bfeta}$. Its proof uses the next lemma, which is established in Aulbach et al. \cite{aulfaho11}.

\begin{lemma} If $\bfeta$ is a standard MSP with generator $\bfZ$, we have for $f\in\barE$
\begin{equation*}
P(\eta_t>f(t),\fat) \ge 1-\exp\left(-E\left(\inf_{0\le t\le
1}(\abs{f(t)}Z_t)\right)\right).
\label{eq:expansion_of_survivor_function_of_eta}
\end{equation*}
\end{lemma}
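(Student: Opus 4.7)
The plan is to deduce this lower bound from the Poisson point process representation of the standard MSP $\bfeta$ in terms of its generator $\bfZ$. Let $(U_i)_{i\ge 1}$ be the points of a unit--rate Poisson process on $(0,\infty)$ and let $(\bfZ^{(i)})_{i\ge 1}$ be an i.i.d.\ sequence of copies of $\bfZ$, independent of $(U_i)$. Define
\[
\tilde\eta_t := \sup_{i\ge 1}\frac{-U_i}{Z_t^{(i)}},\qquad t\in[0,1].
\]
The first step is to verify that $\tilde\bfeta\stackrel{d}{=}\bfeta$. For any $f\in\barE$,
\[
P\bigl(\tilde\eta_t\le f(t),\fat\bigr)=P\bigl(\forall i:\ U_i\ge \abs{f(t)}Z_t^{(i)}\ \fat\bigr)=P\bigl(\forall i:\ U_i\ge \sup_{t\in[0,1]}\abs{f(t)}Z_t^{(i)}\bigr),
\]
which by the void--probability formula for the Poisson process equals $\exp\bigl(-E(\sup_t\abs{f(t)}Z_t)\bigr)=\exp(-\norm f_D)$, matching equation \eqref{eq:distribution_function_of_standard_MSP}. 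So we may compute with $\tilde\bfeta$.

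The second step is the key monotonicity observation: the event $\set{\eta_t>f(t),\fat}$ is \emph{implied} by the existence of a single index $i^{*}$ such that $U_{i^{*}}<\abs{f(t)}Z_t^{(i^{*})}$ for all $t\in[0,1]$. Indeed, if such an $i^{*}$ exists then at every $t$ we already have $\tilde\eta_t\ge -U_{i^{*}}/Z_t^{(i^{*})}>f(t)$. Equivalently, this sufficient event can be rewritten as $\set{\exists i:\ U_i<\inf_{t\in[0,1]}\abs{f(t)}Z_t^{(i)}}$, so
\[
P\bigl(\eta_t>f(t),\fat\bigr)\ \ge\ P\Bigl(\exists i:\ U_i<\inf_{t\in[0,1]}\abs{f(t)}Z_t^{(i)}\Bigr).
\]

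The third step evaluates the right--hand side. The marked Poisson process $\set{(U_i,\bfZ^{(i)})}$ has intensity $du\otimes P_{\bfZ}$ on $(0,\infty)\times C[0,1]$. The expected number of points falling in the region $\set{(u,z):u<\inf_t\abs{f(t)}z_t}$ equals $E(\inf_{t\in[0,1]}\abs{f(t)}Z_t)$, hence by the void probability formula the complementary event (that \emph{some} point falls in that region) has probability $1-\exp\bigl(-E(\inf_t\abs{f(t)}Z_t)\bigr)$, which is precisely the claimed lower bound.

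The main obstacle is really only the bookkeeping in step one: verifying that the Poisson--point construction gives a bona fide version of $\bfeta$ for \emph{all} $f\in\barE$ simultaneously, and ensuring that $\inf_{t\in[0,1]}\abs{f(t)}Z_t$ is a measurable functional of $\bfZ$. The latter is harmless because $\bfZ$ is continuous and $f$ has only finitely many discontinuities, so the infimum over $[0,1]$ agrees with the infimum over a countable dense set; the former is the standard de~Haan spectral representation, which the paper already implicitly uses via the generator identity \eqref{eq:distribution_function_of_standard_MSP}.
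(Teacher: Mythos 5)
Your argument is correct, but note that the paper does not actually prove this lemma at all --- it is quoted verbatim from Aulbach et al.\ \cite{aulfaho11}, so there is no in-paper proof to match. What you supply is a self-contained derivation via the de Haan/Poisson spectral representation: you realize $\bfeta$ as $\tilde\eta_t=\sup_i(-U_i/Z_t^{(i)})$, observe that a \emph{single} Poisson point with $U_i<\inf_{t}\abs{f(t)}Z_t^{(i)}$ already forces $\tilde\eta_t>f(t)$ for all $t$, and then apply the void-probability formula to the region $\set{(u,z):u<\inf_t\abs{f(t)}z_t}$, whose intensity mass is $E\bigl(\inf_t\abs{f(t)}Z_t\bigr)$ by Fubini. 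All three steps are sound: the identification $\tilde\bfeta\stackrel{d}{=}\bfeta$ follows because the functional distribution function on $\barE$ in \eqref{eq:distribution_function_of_standard_MSP} determines the law on $C[0,1]$, and the measurability issues you flag are indeed routine since $\bfZ$ is continuous and $f$ has only finitely many discontinuities. One small imprecision: the events $\set{\forall t:\ U_i<\abs{f(t)}Z_t^{(i)}}$ and $\set{U_i<\inf_t\abs{f(t)}Z_t^{(i)}}$ are not literally equivalent (they can differ when the infimum is not attained and $U_i$ sits exactly at it), but the inclusion you need for the lower bound --- the second event is contained in the first --- does hold, and the discrepancy is a null event anyway, so nothing breaks. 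This route also makes transparent why the bound is only an inequality: hitting the region with one point is sufficient but not necessary for $\bfeta$ to exceed $f$ everywhere.
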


\begin{proposition}\label{hittingprob}
Let $\bfeta$ a standard MSP with generator $\bfZ$, generator constant $m=E(\sup_{t\in[0,1]}Z_t)>1$ and with the additional property that $\widetilde m:=E(\inf_{t\in[0,1]}Z_t)>0$. 
Then the hitting probability $h_{\bfeta}$ has the properties
\begin{equation*}
h_{\bfeta}(0)=0, \qquad h_{\bfeta}(x)>0 \text{ for } x<0 \quad\text{and}\quad \lim_{x\to-\infty}h_{\bfeta}(x)=0.
\end{equation*}
Moreover,
\begin{equation*}
 0<\int_{-\infty}^0 h_{\bfeta}(x)\,dx\leq \frac{m-\widetilde m}{m\widetilde m}.
\end{equation*}
\end{proposition}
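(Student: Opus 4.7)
The plan is to dispatch items one through three from results already in hand and derive item four from a single Fubini identity.

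Item one, $h_{\bfeta}(0)=0$, is just \eqref{eq:standardMSPnotzero} taken over $I=[0,1]$. Item two, $h_{\bfeta}(x)>0$ for every $x<0$, is precisely the converse half of Proposition \ref{theo:hittingProbabilityNonZero} applied with $I=[0,1]$, since the hypothesis $m>1$ says exactly that $\norm{\bfone_{[0,1]}}_D=m>1$. Item three is a tail estimate: continuity of $\bfeta$ forces $\{\eta_t=x\text{ for some }t\}\subset\{\inf_t\eta_t\le x\}$, and the preceding Lemma applied to the constant function $f\equiv x$ yields
\[
P(\inf_t\eta_t\le x)\;\le\;\exp(x\widetilde m),
\]
which tends to $0$ as $x\to-\infty$.

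The key step for item four is the observation that a continuous sample path on the compact set $[0,1]$ has range equal to the closed interval $[\inf_t\eta_t,\sup_t\eta_t]$, and this interval is contained in $(-\infty,0]$ almost surely. Hence, for each realisation,
\[
\int_{-\infty}^{0} \bfone\{\exists\,t\in[0,1]:\eta_t=x\}\,dx \;=\; \sup_t\eta_t-\inf_t\eta_t,
\]
so Fubini (the integrand is nonnegative and measurable) gives
\[
\int_{-\infty}^{0} h_{\bfeta}(x)\,dx \;=\; E\bigl(\sup_t\eta_t\bigr) - E\bigl(\inf_t\eta_t\bigr).
\]

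The two expectations are then evaluated/bounded by layer-cake. From \eqref{eq:distribution_function_of_standard_MSP} with $f\equiv x$ one has $P(\sup_t\eta_t<x)=\exp(xm)$, which integrates to $E(\sup_t\eta_t)=-1/m$. The preceding Lemma, again with $f\equiv x$, gives $P(\inf_t\eta_t\le x)\le\exp(x\widetilde m)$ and hence $-E(\inf_t\eta_t)\le 1/\widetilde m$. Adding yields the advertised bound $(m-\widetilde m)/(m\widetilde m)$. The strict lower bound $0$ then follows from item two: $h_{\bfeta}$ is measurable as a difference of two monotone distribution-type functions, and it is strictly positive on the whole of $(-\infty,0)$. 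I anticipate no serious obstacle; the only substantive insight is that path-continuity collapses the event "hitting $x$" to "$x$ lies in the compact interval $[\inf\eta,\sup\eta]$", which is exactly what makes the layer-cake/Fubini computation mechanical.
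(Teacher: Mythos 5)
Your argument is correct, and for the first three assertions it follows essentially the paper's own route: $h_{\bfeta}(0)=0$ from \eqref{eq:standardMSPnotzero}, positivity of $h_{\bfeta}(x)$ for $x<0$ from the converse part of Proposition \ref{theo:hittingProbabilityNonZero} with $\norm{\bfone_{[0,1]}}_D=m>1$, and the decay at $-\infty$ from the survivor-function lemma, which yields $P(\inf_{t}\eta_t\le x)\le\exp(x\widetilde m)$. Where you genuinely diverge is the integral bound. The paper stays pointwise: by path continuity and the intermediate value theorem it writes $h_{\bfeta}(x)=P(\eta_t\le x,\fst)-P(\eta_t<x,\fat)=P(\inf_t\eta_t\le x)-\exp(xm)\le\exp(x\widetilde m)-\exp(xm)$ and integrates this explicit majorant over $(-\infty,0]$ to get $1/\widetilde m-1/m$. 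You instead observe that continuity makes the hit set $\{x:\eta_t=x\text{ for some }t\}$ equal to the interval $[\inf_t\eta_t,\sup_t\eta_t]$ and apply Tonelli to obtain the exact identity $\int_{-\infty}^0 h_{\bfeta}(x)\,dx=E(\sup_t\eta_t)-E(\inf_t\eta_t)$, then bound the two expectations by layer-cake using the same two ingredients, $P(\sup_t\eta_t<x)=\exp(xm)$ and $P(\inf_t\eta_t\le x)\le\exp(x\widetilde m)$. Both routes consume identical probabilistic inputs and land on the same constant, but yours buys strictly more: an exact formula identifying $\int_{-\infty}^0 h_{\bfeta}$ with the expected range of the path, which is consistent with the paper's closing example, where $E(\sup_t\eta_t)=-1/2$, $E(\inf_t\eta_t)=-2$ and indeed $\int_{-\infty}^0 h_{\bfeta}(x)\,dx=3/2$. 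The only points worth spelling out are the joint measurability of $(x,\omega)\mapsto\bfone_{\{\inf_t\eta_t\le x\le\sup_t\eta_t\}}$ needed for Tonelli (clear, since $\inf_t\eta_t$ and $\sup_t\eta_t$ are random variables by continuity of the paths) and that strict positivity of the integral requires measurability of $h_{\bfeta}$ together with its positivity on a set of positive Lebesgue measure, both of which you supply via the representation $h_{\bfeta}(x)=P(\inf_t\eta_t\le x)-P(\sup_t\eta_t<x)$.
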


\begin{proof}
The assertion follows by Proposition \ref{theo:hittingProbabilityNonZero} and the inequality 
\begin{eqnarray*}
\lefteqn{P\left(\eta_t=x,\fst\right)}\\
&=& 1-P\left(\eta_t\neq x,\fat\right)\\
&=& 1- \left[ P\left(\eta_t>x,\fat\right)+P\left(\eta_t<x,\fat\right)\right]\\
&=& P\left(\eta_t\leq x,\fst\right)-\exp(xm)\\
&\leq&\exp(x\widetilde m)  -\exp(xm),
\end{eqnarray*}
which holds for all $x\in(-\infty,0]$ by Lemma  \ref{eq:expansion_of_survivor_function_of_eta}.
\end{proof}

In the setup of the preceding proposition, the term $\frac{m-\widetilde m}{m\widetilde m}$ can be interpreted as a measure of the dependence structure of $\bfeta$. In case of complete dependence we have $m=\widetilde m=1$, and, thus, $\frac{m-\widetilde m}{m\widetilde m}=0.$ In case of $m>1$ we immediately get $\widetilde m<1$ and, therefore, $\frac{m-\widetilde m}{m\widetilde m}>0$.

\section{Probability of hitting more than once}\label{sec:more_than_once}

Now the question arises how often the paths of a standard MSP hit some $x_0<0$. We give a sufficient condition on the generator $\bfZ$ of a standard MSP $\bfeta$ such that the probability of the event that the paths of $\bfeta$ hit some $x_0<0$ (at least) two times in some interval $[t',t'']\subset[0,1]$ is positive for every $x_0<0$. We need the following Lemma which is of interest of its own.

\begin{lemma}\label{lem:specialhitting}
Take $0\leq t'<t''\leq 1$ and consider a standard MSP $\bfeta$ with generator $\bfZ$. Then, for every $t_0\in(t',t'')$ and  every $x_0<0$
\begin{equation*}
 P\left(\eta_{t'}\leq x_0,\eta_{t_0}>x_0,\eta_{t''}\leq x_0\right) =0 
\end{equation*}
if, and only if,  
\begin{equation*}
E\left(\sup_{t\in[t',t'']}Z_t\right) = E\left(\max(Z_{t'},Z_{t''})\right).
\end{equation*}
\end{lemma}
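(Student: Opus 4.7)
The plan is to express the probabilities of the events
\[
A:=\{\sup_{t\in[t',t'']}\eta_t\le x_0\}\subset B:=\{\eta_{t'}\le x_0,\ \eta_{t''}\le x_0\}
\]
via \eqref{eq:distribution_function_of_standard_MSP}, and then to use path-continuity of $\bfeta$ to link the difference $B\setminus A$ to the three-point events appearing in the statement. Recall that \eqref{eq:standardMSPnotzero} yields $\eta_t<0$ for every $t\in[0,1]$ almost surely, so indicator-type choices of $f$ will give the desired events up to a null set.

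First I would plug $f:=x_0\bfone_{[t',t'']}\in\barE$ into \eqref{eq:distribution_function_of_standard_MSP}; the event $\{\eta_t\le f(t),\fat\}$ then coincides a.s.\ with $A$, and a direct computation of $\norm{f}_D$ yields
\[
P(A)=\exp\bigl(x_0\,E(\sup_{t\in[t',t'']}Z_t)\bigr).
\]
Applying the same formula to $\tilde f:=x_0\bfone_{\{t',t''\}}\in\barE$ gives
\[
P(B)=\exp\bigl(x_0\,E(\max(Z_{t'},Z_{t''}))\bigr).
\]
Since $x_0<0$, the claimed expectation identity is equivalent to $P(A)=P(B)$, i.e.\ to $P(B\setminus A)=0$.

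The second key step is to establish the set identity
\[
B\setminus A\;=\;\bigcup_{t_0\in(t',t'')\cap\Q}\{\eta_{t'}\le x_0,\ \eta_{t_0}>x_0,\ \eta_{t''}\le x_0\}
\]
by using path-continuity. The inclusion $\supseteq$ is immediate. For $\subseteq$: on $B\setminus A$ the continuous function $t\mapsto\eta_t$ attains its maximum on $[t',t'']$ at some interior point $s^\ast\in(t',t'')$, since both endpoints are $\le x_0<\sup\eta$, and by continuity $\eta_t>x_0$ for all $t$ in an open neighbourhood of $s^\ast$, in particular at some rational $t_0$. Note further that for every $t_0\in(t',t'')$ the three-point event from the lemma is trivially contained in $B\setminus A$.

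Both directions of the equivalence then follow. If $E(\sup_{t\in[t',t'']}Z_t)=E(\max(Z_{t'},Z_{t''}))$, then $P(B\setminus A)=0$, hence every three-point event has probability zero. Conversely, if every three-point event has probability zero, countable subadditivity applied to the rational union above gives $P(B\setminus A)=0$, so $P(A)=P(B)$ and the expectation identity follows. The main potential obstacle is the countable-union identification of $B\setminus A$, which relies crucially on the continuity of the sample paths of $\bfeta$ guaranteed by the continuity of the generator $\bfZ$.
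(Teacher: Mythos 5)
Your argument is correct, but it takes a genuinely different route from the paper's. The paper never forms the global events $A$ and $B$: for a \emph{fixed} $t_0$ it writes the three-point probability as the difference $\exp\bigl(x_0E(\max(Z_{t'},Z_{t''}))\bigr)-\exp\bigl(x_0E(\max(Z_{t'},Z_{t_0},Z_{t''}))\bigr)$ of a bivariate and a trivariate marginal probability, converts its vanishing into the almost sure identity $\max(Z_{t'},Z_{t_0},Z_{t''})=\max(Z_{t'},Z_{t''})$, and then lets $t_0$ run through $(t',t'')\cap\Q$ and invokes continuity of the \emph{generator} $\bfZ$ to obtain $\sup_{t\in[t',t'']}Z_t=\max(Z_{t'},Z_{t''})$ a.s. You instead apply \eqref{eq:distribution_function_of_standard_MSP} only twice (to $x_0\bfone_{[t',t'']}$ and to $x_0\bfone_{\{t',t''\}}$) and push all the work onto the set identity $B\setminus A=\bigcup_{t_0\in(t',t'')\cap\Q}\{\eta_{t'}\le x_0,\,\eta_{t_0}>x_0,\,\eta_{t''}\le x_0\}$, which rests on continuity of the paths of $\bfeta$ rather than of $\bfZ$. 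Both arguments are sound, and your identification of the relevant events with $A$ and $B$ up to null sets via \eqref{eq:standardMSPnotzero} is handled correctly. Your version is arguably more transparent probabilistically --- the expectation gap is, up to the exponential transform, exactly $P(B\setminus A)$, and your ``only if'' direction even needs the hypothesis only for rational $t_0$ and a single $x_0$, which is slightly stronger than what the paper proves. What the paper's route buys is the pathwise generator identity \eqref{eq:generator_condition}, which is quoted later as condition \ref{item:generator_condition} of Corollary \ref{cor:generator_condititon} and drives the discussion of Example \ref{ex:nonlineargen}; with your argument that identity would still have to be derived separately.
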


\begin{proof}
Let $t_0\in(t',t'')$ and $x_0<0$ be given. Then
\begin{eqnarray*}
\lefteqn{P\left(\eta_{t'}\leq x_0,\eta_{t_0}>x_0,\eta_{t''}\leq x_0\right)}\\
&=& P\left(\eta_{t'}\leq x_0 ,\eta_{t''}\leq x_0\right) -P\left(\eta_{t'}\leq x_0,\eta_{t_0}\leq x_0,\eta_{t''}\leq x_0\right)\\
&=& \exp\left(x_0E\left(\max(Z_{t'},Z_{t''})\right)\right)-\exp\left(x_0E\left(\max(Z_{t'},Z_{t_0},Z_{t''})\right)\right),
\end{eqnarray*}
and this is equal to zero if, and only if, 
\begin{eqnarray*}
&&E\left(\max(Z_{t'},Z_{t''})\right)=E\left(\max(Z_{t'},Z_{t_0},Z_{t''})\right)\\
 \iff&& P\left(\max(Z_{t'},Z_{t''})=\max(Z_{t'},Z_{t_0},Z_{t''})\right)=1.
\end{eqnarray*}
Since $t_0\in(t',t'')$ was arbitrary, we get for finitely many $t_1,\ldots,t_n\in (t',t''),\ n\in\N,$
\begin{equation*}
\max(Z_{t'},Z_{t_1},\ldots,Z_{t_n},Z_{t''})= \max(Z_{t'},Z_{t''}) \quad \text{a.s.},
\end{equation*} 
and, thus, the continuity of $\bfZ$ implies with $\{t_1,t_2,\ldots\}:=(t',t'')\cap\Q$
\begin{equation}\label{eq:generator_condition}
\sup_{t\in[t',t'']}Z_t=\sup_{t\in\{t',t'',t_1,t_2,\ldots\}}Z_t=\lim_{n\to\infty}\max_{t\in\{t',t_1,\ldots,t_n,t''\}}Z_t=\max(Z_{t'},Z_{t''})
\end{equation}
with probability one, which is equivalent to $E\left(\sup_{t\in[t',t'']}Z_t\right) = E\left(\max(Z_{t'},Z_{t''})\right)$.
\end{proof}

Now the following assertion on the hitting probability follows easily from the foregoing Lemma.
\begin{proposition} \label{prop:suff_cond_hitting2times}
Let $t',t''\in[0,1]$ be arbitrary with $0\leq t'<t''\leq 1$ and consider a standard MSP $\bfeta$ with generator $\bfZ$. If we have 
\begin{equation}\label{equ:expectation_sup=max}
E\left(\sup_{t\in[t',t'']}Z_t\right)> E\left(\max\left(Z_{t'},Z_{t''}\right)\right),
\end{equation} then, for every $t_0\in(t',t'')$ and every $x_0<0$,
\begin{equation*}
P\left(\eta_t=x_0 \fst[[t',t_0]],\eta_t=x_0 \fst[[t_0,t'']]\right)>0.
\end{equation*}
\end{proposition}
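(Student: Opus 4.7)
The plan is to combine Lemma \ref{lem:specialhitting} with the continuity of sample paths of $\bfeta$ via the intermediate value theorem. The point is that Lemma \ref{lem:specialhitting} already characterizes when the three-point event $\{\eta_{t'}\le x_0,\eta_{t_0}>x_0,\eta_{t''}\le x_0\}$ has probability zero, so only its contrapositive and a routine path-continuity argument remain.

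First I would observe that $\sup_{t\in[t',t'']}Z_t\ge \max(Z_{t'},Z_{t''})$ holds pointwise, so the hypothesis \eqref{equ:expectation_sup=max} is strict and is exactly the negation of the condition characterizing the zero case in Lemma \ref{lem:specialhitting}. Thus, for every $t_0\in(t',t'')$ and every $x_0<0$,
\begin{equation*}
P\left(\eta_{t'}\le x_0,\,\eta_{t_0}>x_0,\,\eta_{t''}\le x_0\right)>0.
\end{equation*}

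Next I would show that this three-point event is contained in the hitting event appearing in the proposition. Since $\bfeta$ takes values in $C[0,1]$, its paths are almost surely continuous on $[0,1]$. On the event $\{\eta_{t'}\le x_0<\eta_{t_0}\}$ the intermediate value theorem yields some $s\in[t',t_0]$ with $\eta_s=x_0$, and analogously on $\{\eta_{t_0}>x_0\ge\eta_{t''}\}$ there is some $s\in[t_0,t'']$ with $\eta_s=x_0$. Therefore
\begin{equation*}
\left\{\eta_{t'}\le x_0,\eta_{t_0}>x_0,\eta_{t''}\le x_0\right\}\subseteq\left\{\eta_s=x_0\text{ for some }s\in[t',t_0]\right\}\cap\left\{\eta_s=x_0\text{ for some }s\in[t_0,t'']\right\},
\end{equation*}
and monotonicity of $P$ gives the claim.

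No genuine obstacle arises; the only care needed is that the intermediate value step requires continuity of paths, but that is built into the setup ($\bfeta\in C[0,1]$), so the work really is just invoking Lemma \ref{lem:specialhitting} and observing the event inclusion above.
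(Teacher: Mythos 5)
Your proposal is correct and follows essentially the same route as the paper: apply Lemma \ref{lem:specialhitting} (in contrapositive form) to get positivity of the three-point event, then bound the hitting probability from below by that event's probability. The paper states the event inclusion as a bare inequality, whereas you justify it explicitly via path continuity and the intermediate value theorem, which is exactly the implicit reasoning.
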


\begin{proof}
 By Lemma \ref{lem:specialhitting}, condition \eqref{equ:expectation_sup=max} is equivalent to $P\left(\eta_{t'}\leq x_0,\eta_{t_0}>x_0,\eta_{t''}\leq x_0\right)>0$, and, thus, 
\begin{eqnarray*}
\lefteqn{P\left(\eta_t=x_0 \fst[[t',t_0]],\eta_t=x_0 \fst[[t_0,t'']]\right) }\\
&\geq &P\left(\eta_{t'}\leq x_0,\eta_{t_0}>x_0,\eta_{t''}\leq x_0\right) >0.
\end{eqnarray*}

\end{proof}

In the proof of Lemma \ref{lem:specialhitting}, the property $\sup_{t\in[t',t'']}Z_t=\max(Z_{t'},Z_{t''})$ almost surely of a generator process $\bfZ$ plays a crucial role, cf. equation \eqref{eq:generator_condition}. It is clear that a generator process which is pathwise linear on $[t',t'']$, i.e. $Z_t:=\frac{t''-t}{t''-t'}Z_{t'}+ \frac{t-t'}{t''-t'}Z_{t''}, t\in [t',t'']$ a.s.,  obviously fulfills \eqref{eq:generator_condition}. All paths of a generator $\bfZ$ fulfilling \eqref{eq:generator_condition} have to be either strictly monotone or convex on $[t',t'']$ and one may ask if there are other examples than pathwise linear processes: the answer is "yes", as the next Example shows.
\begin{example}\label{ex:nonlineargen}
Take real numbers $a,b,c,d,e>0$ with the following properties:
\begin{equation*}
1<a;\quad b<1;\quad 1<c<\frac{a-b}{a-1};\quad (1<)\frac{a-b}{a-b-c(a-1)}<d;\quad e<1;
\end{equation*}
and define
\begin{equation*}
p:= \frac{1-b}{a-b};\quad \widetilde p:= \frac{1-e}{d-e}.
\end{equation*}
Let $Y,\widetilde Y$ be some independent Bernoulli rvs with $P(Y=1)=p=1- P(Y=0)$ and $P(\widetilde Y=1)=\widetilde p=1- P(\widetilde Y=0)$.
Now define
\begin{equation*}
Z_0:= Ya+(1-Y)b;\quad Z_{1/2}:=1;\quad Z_1:=(1-Y)c+\left(1- \frac{a-1}{a-b}c\right)\left(\widetilde Yd+(1-\widetilde Y)e\right).
\end{equation*}
Elementary computations show that $E(Z_0)=E(Z_{1/2})=E(Z_1)=1$, so the linear interpolation process $\bfZ=(Z_t)_{t\in[0,1]}$ defined by
\begin{equation*}
Z_t:=\begin{cases} 2(\frac12-t)Z_0 +2tZ_{1/2} & \text{ for } t\in[0,1/2]\\
                   2(1-t)Z_{1/2}+ 2(t-\frac12)Z_1& \text{ for } t\in[1/2,1]. \end{cases}
\end{equation*} 
is a proper generator process. We have 
\begin{equation*}
P\left(\sup_{t\in[t',t'']}Z_t = \max(Z_{t'},Z_{t''})\right)=1
\end{equation*}
for \emph{arbitrary} $0\le t'<t''\le 1$, as three of the possible four paths are (strictly) monotone, and there is (with probability $p\cdot\widetilde p$) one path which is (strictly) convex.
Note that the numbers $a,b,c,d,e$ can be substituted by appropriate rvs (independent of each other and of $Y,\widetilde Y$), which have those values as their expectation, respectively, and that $Z_{1/2}$ can also be chosen to be random.
\end{example}

Nevertheless, equation \eqref{eq:generator_condition} has some further implications.

\begin{cor}\label{cor:generator_condititon}
Let $\bfeta$ be a standard MSP with generator process $\bfZ$ and fix $0\le t'<t''\le 1$. Then the following conditions are equivalent:
\begin{enumerate}
    \item \label{item:hitting_prob}$P\left(\eta_{t'}\leq x_0,\eta_{t_0}>x_0,\eta_{t''}\leq x_0\right)=0$, for all $x_0<0$ and every $t_0\in(t',t'')$;
	\item \label{item:generator_condition} $P(\sup_{t\in[t',t'']}Z_t=\max(Z_{t'},Z_{t''}))=1$;
	\item \label{item:generator_constant_condition} $E(\sup_{t\in[t',t'']}Z_t)=E(\max(Z_{t'},Z_{t''}))$;
	\item \label{item:df}$P(\eta_t\leq x_0\fat[[t',t'']])=P(\eta_{t'}\leq x_0,\eta_{t''}\leq x_0)$, for all $x_0<0$;
	\item \label{item:survivor_prob} $P(\eta_t\leq x_0\fat[[t',t'']])-P(\eta_{t'} > x_0,\eta_{t''} > x_0)= 2\exp(x_0)-1$, for all $x_0<0$.
\end{enumerate}
\end{cor}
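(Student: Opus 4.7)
The plan is to organize the five conditions around the central one, condition \eqref{item:generator_constant_condition}, which is the expectation identity already featured in Lemma \ref{lem:specialhitting}. I will show (i) $\Leftrightarrow$ (iii) (which is just Lemma \ref{lem:specialhitting}), then (ii) $\Leftrightarrow$ (iii) (which is essentially a detail extracted from the proof of that Lemma), then (iii) $\Leftrightarrow$ (iv) via the representation \eqref{eq:distribution_function_of_standard_MSP}, and finally (iv) $\Leftrightarrow$ (v) via inclusion--exclusion.

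For (i) $\Leftrightarrow$ (iii) I simply cite Lemma \ref{lem:specialhitting}. For (ii) $\Rightarrow$ (iii) I take expectations of the pointwise a.s.\ identity (both sides are dominated by $\sup_{t \in [0,1]} Z_t$, which is integrable by \eqref{eqn:properties_of_generator}). For (iii) $\Rightarrow$ (ii) I observe that $\sup_{t\in[t',t'']} Z_t \ge \max(Z_{t'},Z_{t''})$ pointwise, so equality of expectations forces almost sure equality; in fact this is precisely the content of \eqref{eq:generator_condition} in the proof of Lemma \ref{lem:specialhitting}.

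For (iii) $\Leftrightarrow$ (iv), applying \eqref{eq:distribution_function_of_standard_MSP} with $f = x_0 \bfone_{[t',t'']}$ gives $P(\eta_t \le x_0 \fat[[t',t'']]) = \exp\bigl(x_0 E(\sup_{t\in[t',t'']} Z_t)\bigr)$, while applying it with $f$ supported on $\{t',t''\}$ yields $P(\eta_{t'}\le x_0,\eta_{t''}\le x_0) = \exp\bigl(x_0 E(\max(Z_{t'},Z_{t''}))\bigr)$. Since $\exp$ is injective and $x_0 < 0$ is free, equality of the two probabilities for every $x_0 < 0$ is equivalent to (iii) (in fact one single $x_0 < 0$ already suffices). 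For (iv) $\Leftrightarrow$ (v) I use $P(\eta_{t'}>x_0,\eta_{t''}>x_0) = 1 - 2\exp(x_0) + P(\eta_{t'}\le x_0,\eta_{t''}\le x_0)$, obtained from inclusion--exclusion together with $P(\eta_s \le x_0) = \exp(x_0)$. Substituting this into the left-hand side of (v) shows that (v) is equivalent to $P(\eta_t\le x_0\fat[[t',t'']]) = P(\eta_{t'}\le x_0,\eta_{t''}\le x_0)$, which is exactly (iv). There is no serious obstacle here: the real content sits in Lemma \ref{lem:specialhitting}, and the remaining equivalences are purely algebraic manipulations using the $D$-norm representation and elementary inclusion--exclusion.
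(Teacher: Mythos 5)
Your proof is correct and follows essentially the same route as the paper: the equivalences \eqref{item:hitting_prob}$\iff$\eqref{item:generator_condition}$\iff$\eqref{item:generator_constant_condition} are delegated to Lemma \ref{lem:specialhitting} and its proof, \eqref{item:generator_constant_condition}$\iff$\eqref{item:df} is obtained from the $D$-norm representation \eqref{eq:distribution_function_of_standard_MSP}, and \eqref{item:df}$\iff$\eqref{item:survivor_prob} is the same inclusion--exclusion computation, merely arranged by direct substitution rather than by first splitting $P(\eta_{t'}\le x_0,\eta_{t''}\le x_0)$ into two disjoint events.
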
 
\begin{proof}
The proof of Proposition \ref{prop:suff_cond_hitting2times} already contains \ref{item:hitting_prob}$\iff$\ref{item:generator_condition}$\iff$\ref{item:generator_constant_condition}. Moreover, \ref{item:generator_constant_condition} is true, if, and only if,
\begin{eqnarray*}
 P(\eta_t\leq x_0\bfone_{[t',t'']}(t) \fat)
&=& \exp\left(x_0E\left(\sup_{t\in[t',t'']}Z_t\right)\right)\\
&=& \exp\left(x_0E\left(\max(Z_{t'},Z_{t''}\right)\right)\\
&=& P(\eta_{t'}\leq x_0,\eta_{t''}\leq x_0),
\end{eqnarray*} 
 for all $x_0<0$, i.e. \ref{item:generator_constant_condition}$\iff$\ref{item:df}. Finally, we have for arbitrary $x_0<0$
 \begin{eqnarray*}
\lefteqn{P(\eta_{t'}\leq x_0,\eta_{t''}\leq x_0)=}&&\\
&=& P(\eta_t\leq x_0\bfone_{[t',t'']}(t) \fat) \\
&& \qquad + P(\{\eta_{t'}\leq x_0,\eta_{t''}\leq x_0\}\cap\{\eta_t> x_0 \fst[t',t'']\}),
\end{eqnarray*}
and, thus,
\begin{eqnarray*}
\text{\ref{item:df}}&\iff&P(\{\eta_{t'}\leq x_0,\eta_{t''}\leq x_0\}\cap\{\eta_t> x_0 \fst[t',t'']\})=0\\
&\iff&P(\{\eta_{t'}> x_0\}\cup\{\eta_{t''}> x_0\}\cup\{\eta_t\le x_0 \fat[t',t'']\})=1,
\end{eqnarray*}
which is \ref{item:survivor_prob} by using the inclusion-exclusion formula.
\end{proof}

We finish with an example which shows in particular, that there are standard MSP, which hit every $x_0<0$ twice with positive probability, but the probability of hitting any $x_0<0$ three or more times is equal to zero.  
\begin{example} Let $\eta_0, \eta_1$ independent negative exponantial distributed rvs and define the continuous process $\bfeta$ by
 \begin{equation*}
\eta_t:= \max(\frac{1}{1-t}\eta_0,\frac1t\eta_1),\quad t\in[0,1].
\end{equation*}

 Elementary computations show, that all fidis of $\bfeta$ are max-stable and that the one-dimensional marginal distributions are standard negative exponantial, so $\bfeta$ is a standard MSP.
 
 We have $P(\eta_t<x,\fat)=P(\max(\eta_0,\eta_1)<x)=\exp(2x)$ for $x<0$, so the generator constant of $\bfeta$ is given by $m=2$.
 
 Moreover, elementary computations yield for abitrary $x<0$:
 \begin{eqnarray*}
h(x)&=&P(\eta_t=x,\fst) \\
&=&(1-\exp(x)-x)\exp(x),
\end{eqnarray*}
and this implies $\int_{-\infty}^0h(x)\,dx=3/2$.

Furthermore, in this example elementary computations yield for every $t_0\in(0,1)$ and arbitrary $x_0<0$
\begin{eqnarray*}
\lefteqn{P\big(\{\eta_t=x_0,\fst[[0,t_0)]\}\cap\{\eta_t=x_0,\fst[[t_0,1]]\}\big)}\\
&\qquad \qquad=& (\exp(x_0(1-t_0))-\exp(x_0))(\exp(x_0t_0)-\exp(x_0))>0,
\end{eqnarray*}
so all paths of $\bfeta$ hit every $x_0<0$ two times with positive probability.

On the other hand, it can be shown by elementary arguments that we have for disjoint intervals $I_1,I_2,I_3\subset[0,1]$
\begin{equation*}
P\big(\bigcap_{k=1,2,3}\{\eta_t=x_0,\fst[I_k]\}\big)=0,\ x_0<0,
\end{equation*}
so every path of $\bfeta$ does not hit any $x_0<0$ three times (or more often).
\end{example}

 \section*{Acknowledgement}
 I am very grateful to Michael Falk for constructive and fruitful discussions on the issue of this paper.


\begin{thebibliography}{1}
\expandafter\ifx\csname url\endcsname\relax
  \def\url#1{\texttt{#1}}\fi
\expandafter\ifx\csname urlprefix\endcsname\relax\def\urlprefix{URL }\fi
\expandafter\ifx\csname href\endcsname\relax
  \def\href#1#2{#2} \def\path#1{#1}\fi

\bibitem{dehaf06}
L.~de~Haan, A.~Ferreira, Extreme Value Theory: An Introduction, Springer Series
  in Operations Research and Financial Engineering, Springer, New York, 2006.

\bibitem{ginhv90}
E.~Gin{\'e}, M.~Hahn, P.~Vatan, Max-infinitely divisible and max-stable sample
  continuous processes, Probab. Th. Rel. Fields 87~(2) (1990) 139--165.

\bibitem{aulfaho11}
S.~Aulbach, M.~Falk, M.~Hofmann, On max-stable processes and the functional
  {$D$}-norm, Tech. rep., University of W{\"u}rzburg, submitted (2012).

\bibitem{taka88}
R.~Takahashi, Characterizations of a multivariate extreme value distribution,
  Adv. Appl. Prob. 20~(1) (1988) 235--236.

\end{thebibliography}
\end{document}